\newtheorem{thm}{Theorem}
\newdefinition{rmk}{Remark}
\newproof{pf}{Proof}
\newdefinition{example}{Example}
\newdefinition{definition}{Definition}
\newdefinition{proposition}{Proposition}
\newdefinition{corollary}{Corollary}
\journal{Arkiv}
\begin{document}

\begin{frontmatter}



\title{Exclusion sets in eigenvalue inclusion sets for tensors}



\author[rvt]{Chaoqian Li}
\author[rvt]{Suhua Li}
\author[rvt1]{Qingbing Liu}
\author[rvt]{Yaotang Li\corref{cor1}}
\ead{liyaotang@ynu.edu.cn}

\cortext[cor1]{Corresponding author.}

\address[rvt]{School of Mathematics and Statistics, Yunnan
University, Kunming, Yunnan,  P. R. China 650091}

\address[rvt1]{Department of
Mathematics, Zhejiang Wanli University, Ningbo, P.R. China }

\begin{abstract}
By excluding some sets, which don't include any eigenvalue of a tensor, from some existing eigenvalue inclusion sets,
two new sets are given to locate all eigenvalues of a tensor.
And it is shown that these two sets are contained in the Ger\v{s}gorin eigenvalue inclusion set of tensors provide by Qi (Journal of
Symbolic Computation  2005; 40:1302-1324)
and the Brauer-type eigenvalue inclusion set provide by Li et al. (Numer. Linear Algebra Appl. 2014; 21:39-50) respectively.
Two sufficient conditions such that the determinant of a tensor is not zero are also provided.
\end{abstract}
\begin{keyword} Tensor eigenvalue; Exclusion set;
Ger\v{s}gorin set;  Brauer-type set

\MSC[2010] 15A18, 15A51. 
\end{keyword}

\end{frontmatter}


\section{Introduction}
 We call $\mathcal
{A}=(a_{i_1\cdots i_m})$ a complex (real) tensor of order $m$
dimension $n$, denoted by $ \mathcal
{A}\in \mathbb{C}^{[m,n]}$ $(\mathcal{A}\in \mathbb{R}^{[m,n]}$), if
\[a_{i_1\cdots i_m}\in \mathbb{C}~ (\mathbb{R}),\] where $i_j=1,\ldots,n$ for
$j=1,\ldots, m$. Obviously, a vector is a tensor of order $1$ and a
matrix is a tensor of order $2$.  A real tensor $\mathcal {A}=(a_{i_1\cdots i_m })$ is
called symmetric \cite{Qi3} if
\[a_{i_1\cdots i_m }= a_{\pi(i_1\cdots i_m )},\forall \pi\in
\Pi_m,\]where $\Pi_m$ is the permutation group of $m$ indices. Furthermore,
a complex number $\lambda$ is called an eigenvalue of $\mathcal {A}=(a_{i_1\cdots i_m}) \in \mathbb{C}^{[m,n]}$ and a nonzero complex vector $x$ an
eigenvector of $\mathcal {A}$ associated with $\lambda$ if $\lambda$ and $x$ satisfy
\begin{equation}\label{Maineq}
\mathcal {A}x^{m-1}=\lambda x^{[m-1]}, \end{equation}
where  \[(\mathcal
{A}x^{m-1})_i=\sum\limits_{i_2,\ldots,i_m\in N} a_{ii_2\cdots
i_m}x_{i_2}\cdots x_{i_m},~x^{[m-1]}=(x_1^{m-1},x_2^{m-1},\ldots,x_n^{m-1})^T\] and $N=\{1,2,$$\ldots,n\}$.
This definition was introduced by Qi in \cite{Qi3} where he assumed that
$\mathcal {A}$ is an order $m$ dimension $n$ symmetric tensor
and $m$ is even. Independently, in \cite{Li}, Lim gave such a
definition but restricted $x$ to be a real vector and $\lambda$ to
be a real number. In this case, we call $\lambda$ an H-eigenvalue of
$\mathcal {A}$ and $x$ an H-eigenvector of $\mathcal {A}$ associated
with $\lambda$ \cite{Qi6,Qi3}. Note that there are other
definitions of eigenvalue and eigenvectors, such as, $D$-eigenvalue
and $Z$-eigenvalue; see \cite{Ca1,Ko1,Qi4,Qi1,Qi5,Qi2,Wa}.

One of the important problems on eigenvalues of a tensor is to locate all its eigenvalues, i.e.,
to give a set including all its eigenvalues in the complex plane. The first work owes to Liqun Qi.
He in \cite{Qi3} gave an eigenvalue inclusion set for real symmetric tensors, which is a generalization of the well-known Ger\v{s}gorin set
of matrices \cite{Ger,Varga,Varga1}. Subsequently, Li et al. provided some Brauer-type eigenvalue inclusion sets for general tensors \cite{Liet0,Liet1,Liet2}, and shown thatthe Brauer-type eigenvalue inclusion sets capture all eigenvalues of a tensor precisely than the set given by Qi.
Very recently, Bu et al. extend  the Brualdi set of matrices to  higher order tensors. In addition, another eigenvalue inclusion sets were also given, for details, see \cite{Bu,Bu1,Hu1,Liet3}.

When constructing these existing eigenvalue inclusion sets, one didn't consider the problem that
whether or not there is some proper subset of these sets in which each eigenvalue of a tensor is not included.
In this paper, by using (\ref{Maineq}) and the eigenvector corresponding an eigenvalue of a tensor,
we give some such sets, and exclude them respectively from the Ger\v{s}gorin set of tensors in \cite{Qi3} and the Brauer-type eigenvalue inclusion set in \cite{Liet0}
to give two new sets including all eigenvalues of a tensor.
As applications, two sufficient conditions such that the determinant of a tensor is not zero are also provided.

\section {Exclusion sets in the Ger\v{s}gorin set for tensors}
In \cite{Qi3}, Qi extended the well-known Ger\v{s}gorin's eigenvalue
inclusion theorem \cite{Ger,Varga,Varga1} of matrices to real
symmetric tensors.  This result can be easily generalized to
general tensors \cite{Ya} (see Theorem \ref{th2.1}).

\begin{thm}
\label{th2.1} Let $\mathcal {A}=(a_{i_1\cdots i_m})\in \mathbb{C}^{[m,n]}$. Then
\[
\sigma(\mathcal {A})\subseteq \Gamma(\mathcal
{A})=\bigcup\limits_{i\in N} \Gamma_i(\mathcal {A}),
\]
where $\sigma (\mathcal {A})$ is the set of all the eigenvalues of
$\mathcal{A}$,\[\Gamma_i(\mathcal {A})=\left\{z\in
\mathbb{C}:|z-a_{i\cdots i}|\leq r_i(\mathcal
{A})\right\},~r_i(\mathcal {A})= \sum\limits_{i_2,\ldots,i_m\in
N,\atop \delta_{ii_2\ldots i_m}=0} |a_{ii_2\cdots i_m}|,\]
and \[\delta_{i_1\cdots i_m}=\left\{\begin{array}{cc}
   1,   &if~ i_1=\cdots =i_m,  \\
   0,  &otherwise.
\end{array}
\right.\]
\end{thm}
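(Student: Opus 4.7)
The plan is to adapt the classical proof of Gershgorin's theorem for matrices to the tensor setting, with the eigenvalue equation (\ref{Maineq}) playing the role previously played by $Ax = \lambda x$. So I would begin with an arbitrary $\lambda \in \sigma(\mathcal{A})$ together with an associated nonzero eigenvector $x = (x_1, \ldots, x_n)^T \in \mathbb{C}^n$, and pick an index $t \in N$ at which the modulus of $x$ attains its maximum, i.e.\ $|x_t| = \max_{i \in N} |x_i|$. Since $x$ is nonzero, $|x_t| > 0$, which is the fact that will let me divide at the end.

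Next I would write out the $t$-th coordinate of (\ref{Maineq}) in full and peel off the purely diagonal contribution $a_{t\cdots t}x_t^{m-1}$ from the summation. This gives the identity
\[
(\lambda - a_{t\cdots t})\,x_t^{m-1} \;=\; \sum_{\substack{i_2,\ldots,i_m \in N \\ \delta_{t i_2 \cdots i_m} = 0}} a_{t i_2 \cdots i_m}\, x_{i_2} \cdots x_{i_m},
\]
where the index condition $\delta_{t i_2 \cdots i_m} = 0$ is exactly what rules out the single multi-index $(t,t,\ldots,t)$ that was moved to the left-hand side.

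Then I would take absolute values on both sides and use the maximality $|x_{i_k}| \leq |x_t|$ for $k = 2, \ldots, m$ to obtain
\[
|\lambda - a_{t\cdots t}|\,|x_t|^{m-1} \;\leq\; \Bigl(\,\sum_{\delta_{t i_2 \cdots i_m} = 0} |a_{t i_2 \cdots i_m}|\Bigr)\,|x_t|^{m-1} \;=\; r_t(\mathcal{A})\,|x_t|^{m-1}.
\]
Dividing through by $|x_t|^{m-1}>0$ gives $|\lambda - a_{t\cdots t}|\leq r_t(\mathcal{A})$, so that $\lambda \in \Gamma_t(\mathcal{A}) \subseteq \bigcup_{i\in N}\Gamma_i(\mathcal{A}) = \Gamma(\mathcal{A})$, which is the desired inclusion.

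I don't expect any real obstacle: the argument is structurally identical to the matrix Gershgorin proof, and dropping the symmetry assumption of \cite{Qi3} costs nothing here since the proof only uses one coordinate of the eigenvalue equation at a time. The only thing to be careful about is the bookkeeping of the $m-1$ tensor indices $i_2,\ldots,i_m$ and the use of the Kronecker-like symbol $\delta_{i_1\cdots i_m}$ to describe the off-diagonal set precisely, so that $r_t(\mathcal{A})$ comes out as defined in the statement.
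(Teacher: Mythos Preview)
Your proposal is correct and matches the paper's proof essentially line for line: choose $t$ maximizing $|x_t|$, isolate the diagonal term in the $t$-th coordinate of (\ref{Maineq}), take moduli, bound each $|x_{i_k}|$ by $|x_t|$, and divide through by $|x_t|^{m-1}>0$. The paper carries out exactly this argument, with the same observation that since the particular $t$ is unknown one must take the union over all $i\in N$.
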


$\Gamma_i(\mathcal {A})$ is a disk in the complex plane with $a_{i\cdots i}$ as its center and $r_i(\mathcal
{A})$ as their radii. Obviously,  $\Gamma(\mathcal{A})$ consists of $n$ disks.
The proof of Theorem \ref{th2.1} relies on (\ref{Maineq}), and is listed as follows,
which is useful for getting some exclusion sets.

\textbf{The proof of Theorem \ref{th2.1}} Suppose that $\lambda \in \sigma (\mathcal {A})$ with a corresponding
eigenvector $x=(x_1,x_2,\ldots,x_n)^T$. Let
\[|x_t|=\max\limits_{i\in N} |x_i|.\]
Consider the $t$th equation of (\ref{Maineq}). We have
\[(\lambda-a_{t\cdots t})
x_t^{m-1}=\sum\limits_{\delta_{ti_2\ldots i_m}=0} a_{ti_2\cdots i_m}x_{i_2}\cdots x_{i_m}. \]
Taking absolute values on both sides and using the triangle inequality yields
\[ |\lambda-a_{t\cdots t}|
|x_t|^{m-1}\leq\sum\limits_{\delta_{ti_2\ldots i_m}=0} |a_{ti_2\cdots i_m}||x_{i_2}|\cdots |x_{i_m}|\leq \sum\limits_{\delta_{ti_2\ldots i_m}=0} |a_{ti_2\cdots i_m}||x_{t}|^{m-1}=r_i(\mathcal
{A})|x_{t}|^{m-1}.\]
Hence,
\[ |\lambda-a_{t\cdots t}|\leq r_t(\mathcal
{A}),\]
that is, \begin{equation}\label{eqn2.1}\lambda \in \Gamma_t(\mathcal {A}).\end{equation} We do not know which $t$ each eigenvalue corresponds to, hence we have
$\lambda \in \bigcup\limits_{i\in N} \Gamma_i(\mathcal {A})$, consequently, $\sigma(\mathcal {A})\subseteq \Gamma(\mathcal
{A})$. $\Box$

It is easy to see that we only use the largest modulus $|x_t|$ of the eigenvector $x$ and  the $t$-th equation of (\ref{Maineq}) in the proof of Theorem \ref{th2.1}.
However, the other components of the eigenvector $x$ and the other equations of (\ref{Maineq}) are not considered, which may result in losing some informations on
$\Gamma(\mathcal {A})$. Next, by considering the other components $x_j$ of the eigenvector $x$ with $j\neq t$, we give an improvement of  $\Gamma(\mathcal {A})$.

\begin{thm}
\label{th2.2} Let $\mathcal {A}=(a_{i_1\cdots i_m})\in \mathbb{C}^{[m,n]}$. Then
\[
\sigma(\mathcal {A})\subseteq \Omega(\mathcal
{A})=\bigcup\limits_{i\in N} \Omega_i(\mathcal {A}),
\]
where  $\Omega_i(\mathcal {A})= \Gamma_i(\mathcal {A})\backslash \Delta_i(\mathcal {A})$,
\[ \Delta_i(\mathcal {A})= \bigcup\limits_{j\neq i} \Delta_{ij}(\mathcal {A}) \]
and
\[\Delta_{ij}(\mathcal {A})= \left\{z\in
\mathbb{C}:|z-a_{j\cdots j}| < 2|a_{ji\cdots i}|- r_j(\mathcal
{A})\right\}.\]
Furthermore, $\Omega(\mathcal
{A}) \subseteq  \Gamma(\mathcal {A})$.
\end{thm}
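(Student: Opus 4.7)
The plan is to refine the argument of Theorem \ref{th2.1} by extracting information from the equations of (\ref{Maineq}) indexed by $j\neq t$, where $t$ again denotes any index satisfying $|x_t|=\max_{i\in N}|x_i|$ for an eigenvector $x$ associated with $\lambda$. The proof of Theorem \ref{th2.1} already places $\lambda$ in $\Gamma_t(\mathcal{A})$, so it suffices to show $\lambda\notin\Delta_{tj}(\mathcal{A})$ for every $j\neq t$; once this is established, $\lambda$ lies in $\Gamma_t(\mathcal{A})\setminus\Delta_t(\mathcal{A})=\Omega_t(\mathcal{A})\subseteq\Omega(\mathcal{A})$.

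For a fixed $j\neq t$, I would exploit the $j$-th equation of (\ref{Maineq}). Since $\delta_{jt\cdots t}=0$ whenever $j\neq t$, the term $a_{jt\cdots t}x_t^{m-1}$ appears in the off-diagonal sum; isolating it yields
\[
(\lambda-a_{j\cdots j})x_j^{m-1}-a_{jt\cdots t}x_t^{m-1}=\sum\limits_{\delta_{ji_2\ldots i_m}=0,\atop (i_2,\ldots,i_m)\neq(t,\ldots,t)} a_{ji_2\cdots i_m}x_{i_2}\cdots x_{i_m}.
\]
Taking absolute values and using $|x_{i_\ell}|\leq|x_t|$ bounds the right-hand side above by $(r_j(\mathcal{A})-|a_{jt\cdots t}|)|x_t|^{m-1}$, while the reverse triangle inequality applied to the left-hand side gives
\[
|a_{jt\cdots t}||x_t|^{m-1}-|\lambda-a_{j\cdots j}||x_j|^{m-1}\leq(r_j(\mathcal{A})-|a_{jt\cdots t}|)|x_t|^{m-1}.
\]
Rearranging yields the weighted bound $|\lambda-a_{j\cdots j}||x_j|^{m-1}\geq(2|a_{jt\cdots t}|-r_j(\mathcal{A}))|x_t|^{m-1}$.

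The main obstacle is promoting this weighted bound to the desired pointwise inequality $|\lambda-a_{j\cdots j}|\geq 2|a_{jt\cdots t}|-r_j(\mathcal{A})$, because $|x_j|$ can be strictly smaller than $|x_t|$, or even zero. A case split resolves this: if $2|a_{jt\cdots t}|-r_j(\mathcal{A})\leq 0$, then $\Delta_{tj}(\mathcal{A})$ is empty and there is nothing to prove; otherwise the right-hand side is strictly positive, which forces $|x_j|>0$, after which $|x_j|^{m-1}\leq|x_t|^{m-1}$ gives $|\lambda-a_{j\cdots j}||x_t|^{m-1}\geq|\lambda-a_{j\cdots j}||x_j|^{m-1}\geq(2|a_{jt\cdots t}|-r_j(\mathcal{A}))|x_t|^{m-1}$, and dividing by $|x_t|^{m-1}>0$ delivers the bound. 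The final assertion $\Omega(\mathcal{A})\subseteq\Gamma(\mathcal{A})$ is immediate from the set-theoretic identity $\Omega_i(\mathcal{A})=\Gamma_i(\mathcal{A})\setminus\Delta_i(\mathcal{A})\subseteq\Gamma_i(\mathcal{A})$ for each $i\in N$.
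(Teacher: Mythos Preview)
Your proof is correct and follows essentially the same route as the paper: use the $j$-th equation of (\ref{Maineq}), isolate the term $a_{jt\cdots t}x_t^{m-1}$, and bound everything else via $|x_{i_\ell}|\le|x_t|$. The only difference is that the paper replaces $|x_j|^{m-1}$ by $|x_t|^{m-1}$ \emph{before} rearranging, which yields $|a_{jt\cdots t}|\le|\lambda-a_{j\cdots j}|+(r_j(\mathcal{A})-|a_{jt\cdots t}|)$ directly and so avoids your case split on the sign of $2|a_{jt\cdots t}|-r_j(\mathcal{A})$.
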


\begin{proof} Suppose that $\lambda \in \sigma (\mathcal {A})$ with a corresponding
eigenvector $x=(x_1,x_2,\ldots,x_n)^T$. According to the proof of Theorem \ref{th2.1}, (\ref{eqn2.1}) holds. Furthermore, for any $j\in N$ and $j\neq t$, we have
\[|x_j|\leq |x_t|,\]
and
\[(\lambda-a_{j\cdots j})
x_j^{m-1}=\sum\limits_{\delta_{ji_2\ldots i_m}=0,\atop \delta_{ti_2\ldots i_m}=0} a_{ji_2\cdots i_m}x_{i_2}\cdots x_{i_m}+a_{jt\cdots t}x_{t}^{m-1}. \]
Hence, \[a_{jt\cdots t}x_{t}^{m-1}=(\lambda-a_{j\cdots j})
x_j^{m-1}- \sum\limits_{\delta_{ji_2\ldots i_m}=0,\atop \delta_{ti_2\ldots i_m}=0} a_{ji_2\cdots i_m}x_{i_2}\cdots x_{i_m} \]
and
\begin{eqnarray*} |a_{jt\cdots t}||x_{t}|^{m-1}&\leq&|\lambda-a_{j\cdots j}|
|x_j|^{m-1}+\sum\limits_{\delta_{ji_2\ldots i_m}=0,\atop \delta_{ti_2\ldots i_m}=0} |a_{ji_2\cdots i_m}||x_{i_2}|\cdots |x_{i_m} | \\
&\leq & |\lambda-a_{j\cdots j}|
|x_t|^{m-1}+\sum\limits_{\delta_{ji_2\ldots i_m}=0,\atop \delta_{ti_2\ldots i_m}=0} |a_{ji_2\cdots i_m}||x_{t}|^{m-1},
\end{eqnarray*}
which implies
\[ |a_{jt\cdots t}| \leq |\lambda-a_{j\cdots j}| + \sum\limits_{\delta_{ji_2\ldots i_m}=0,\atop \delta_{ti_2\ldots i_m}=0} |a_{ji_2\cdots i_m}|\]
and
\[|\lambda-a_{j\cdots j}| \geq |a_{jt\cdots t}|-\sum\limits_{\delta_{ji_2\ldots i_m}=0,\atop \delta_{ti_2\ldots i_m}=0} |a_{ji_2\cdots i_m}|= 2|a_{jt\cdots t}|-r_j(\mathcal
{A}),\]
i.e.,
\begin{equation}\label{eqn2.2}  \lambda \notin \Delta_{tj}(\mathcal {A}).\end{equation}
Note that (\ref{eqn2.2}) holds for any $j\neq t$. Then
\begin{equation}\label{eqn2.3}\lambda \notin \bigcup\limits_{j\neq t} \Delta_{tj}(\mathcal {A})= \Delta_{t}(\mathcal {A}).\end{equation}
Combining (\ref{eqn2.1}) and  (\ref{eqn2.3}) gives
\[ \lambda \in \Gamma_t(\mathcal {A}) \backslash \Delta_{t}(\mathcal {A})=\Omega_t(\mathcal {A}),\]
consequently,
$ \lambda \in \bigcup\limits_{i\in N} \Omega_i(\mathcal {A})=\Omega(\mathcal
{A}) $ and $ \sigma(\mathcal {A})\subseteq \Omega(\mathcal
{A})$.

Moreover, from \[ \Omega_i(\mathcal {A})= \Gamma_i(\mathcal {A})\backslash \Delta_i(\mathcal {A}) \subseteq  \Gamma_i(\mathcal {A})\] we can easily obtain
$\Omega(\mathcal{A}) \subseteq  \Gamma(\mathcal {A})$. The proof is completed. \end{proof}

\begin{rmk} \label{rmk2.1} Note that
$|a_{ji\cdots i}|\leq  r_j(\mathcal
{A}) $ and $2|a_{ji\cdots i}|- r_j(\mathcal
{A})\leq  r_j(\mathcal{A}) $. Hence, \[\Delta_{ij}(\mathcal {A}) \subseteq \Gamma_j(\mathcal {A}), j\neq i, j\in N\] and
\[\Delta_i(\mathcal {A})= \bigcup\limits_{j\neq i} \Delta_{ij}(\mathcal {A}) \subseteq \Gamma(\mathcal {A}).\]
On the other hand, it is shown by Theorem \ref{th2.2} that $\Delta_i(\mathcal {A})$ dose not include any eigenvalues of a tensor $A$, and $\Omega(\mathcal {A})$ is obtained by excluding
some proper subsets $\Delta_i(\mathcal {A})$ from the Ger\v{s}gorin set $ \Gamma(\mathcal {A})$. And hence $\Delta_i(\mathcal {A})$ is a so-called exclusion set for the Ger\v{s}gorin set $  \Gamma(\mathcal {A})$.

 Consider the tensor $\mathcal {A}=(a_{ijk})\in \mathbb{C}^{[3,4]}$,where
\[ a_{111}=12, a_{222}=14, a_{333}=8+\textbf{i}, a_{444}=11, \]
\[a_{122}=4+\textbf{i},a_{144}=15-\textbf{i},a_{233}=5-\textbf{i},a_{211}=-2-\textbf{i},\]
\[a_{322}=6, a_{344}=4,~a_{411}=16,a_{422}=2,\]
and other $a_{ijk}=0$. The sets $\Omega_1(\mathcal {A})$, $\Omega_2(\mathcal {A})$, $\Omega_3(\mathcal {A})$ and $\Omega_4(\mathcal {A})$ are drawn in Figure 1.
And their union $ \Omega(\mathcal {A})$ are drawn in Figure 2.
The exact eigenvalues of $ \mathcal {A}$ are plotted with asterisks, which are computed by the MATLAB code \textit{solve}. It is not difficult to see that each $\Delta_i(\mathcal {A})$, $i=1,2,3,4$ does not include any eigenvalues of $\mathcal {A}$, but
$\Omega(\mathcal {A})$ does, and that $\Omega(\mathcal {A})$ is a proper subset of $\Gamma(\mathcal {A}) $, i.e., $\Omega(\mathcal {A}) \subset  \Gamma(\mathcal {A}) $.


\end{rmk}

The determinant of a tensor $\mathcal {A}\in \mathbb{C}^{[m,n]}$, denoted by $det (\mathcal {A})$, is the resultant of the ordered system
of homogeneous equations $\mathcal {A} x^{m-1} =0$ \cite{Hu}, and is be closely related to the eigenvalue inclusion set of a tensor. Next, Based on Theorem \ref{th2.2} and the fact that
$ det (\mathcal {A})= 0$ if and only if $0\in \sigma(\mathcal {A})$  for a tensor $\mathcal {A}$ \cite{Hu}, we can easily obtain the following condition such that $det (\mathcal {A}) \neq 0 $.

\begin{corollary} \label{cor0} Let $\mathcal {A} =(a_{i_1i_2\cdots i_n})\in \mathbb{C}^{[m,n]}$. If for each $i\in N$,
either
\[ |a_{i\cdots i}|> r_i(\mathcal{A})\]
or
\[|a_{j\cdots j}| < 2|a_{ji\cdots i}|- r_j(\mathcal
{A})~for ~some j\neq i,\]
then $det (\mathcal {A}) \neq 0 $.
\end{corollary}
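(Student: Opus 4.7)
The plan is to prove the corollary by contraposition, leveraging Theorem \ref{th2.2} together with the equivalence $\det(\mathcal{A}) = 0 \iff 0 \in \sigma(\mathcal{A})$. The whole argument amounts to checking that the hypothesis forces $0 \notin \Omega_i(\mathcal{A})$ for every $i$, hence $0 \notin \Omega(\mathcal{A})$, hence $0$ is not an eigenvalue.

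First, I would suppose for contradiction that $\det(\mathcal{A}) = 0$. Then $0 \in \sigma(\mathcal{A})$, so by Theorem \ref{th2.2} there exists some index $i \in N$ with $0 \in \Omega_i(\mathcal{A}) = \Gamma_i(\mathcal{A}) \setminus \Delta_i(\mathcal{A})$. The membership $0 \in \Gamma_i(\mathcal{A})$ translates directly into the inequality $|a_{i\cdots i}| \leq r_i(\mathcal{A})$, which already negates the first alternative of the hypothesis for this particular $i$.

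Next, I would unpack the condition $0 \notin \Delta_i(\mathcal{A}) = \bigcup_{j\neq i} \Delta_{ij}(\mathcal{A})$. This means $0 \notin \Delta_{ij}(\mathcal{A})$ for every $j \neq i$, i.e., for every $j \neq i$ we have $|a_{j\cdots j}| \geq 2|a_{ji\cdots i}| - r_j(\mathcal{A})$. This precisely negates the second alternative of the hypothesis for the same $i$. Combining the two negations gives an index $i$ at which the hypothesis fails, contradicting the assumption that it holds for every $i \in N$.

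I do not anticipate any real obstacle here: the corollary is essentially a direct reformulation of Theorem \ref{th2.2} evaluated at the point $z = 0$, and the only care required is to be explicit about how the two alternatives in the hypothesis correspond, respectively, to $0 \notin \Gamma_i(\mathcal{A})$ and $0 \in \Delta_i(\mathcal{A})$. Once that correspondence is stated cleanly, the proof is a two-line contrapositive argument.
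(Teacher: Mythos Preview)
Your proof is correct and follows exactly the approach the paper indicates: it uses Theorem~\ref{th2.2} together with the equivalence $\det(\mathcal{A})=0 \iff 0\in\sigma(\mathcal{A})$, and simply unpacks what $0\notin\Omega_i(\mathcal{A})$ means for each $i$. The paper itself does not spell out the details, but your contrapositive argument is precisely what is intended.
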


A matrix is a tensor of order $2$. Hence, when $m=2$, Theorem \ref{th2.2} reduces to the following result.

\begin{corollary} \label{cor1} Let $A=(a_{ij})$ be a complex matrix. Then
\[
\sigma(A)\subseteq \Omega(A)=\bigcup\limits_{i\in N} \Omega_i(A),
\]
where  $\Omega_i(A)= \Gamma_i(A)\backslash \Delta_i(A)$,
\[ \Delta_i(A)= \bigcup\limits_{j\neq i} \Delta_{ij}(A) \]
and
\[\Delta_{ij}(A)= \left\{z\in
\mathbb{C}:|z-a_{jj}| < 2|a_{ji}|- r_j(A)\right\}.\]
Furthermore, $\Omega(A) \subseteq  \Gamma(A)$.
\end{corollary}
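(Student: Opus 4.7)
The plan is to specialize Theorem \ref{th2.2} to the matrix case $m = 2$. Since a matrix $A = (a_{ij})$ is precisely an order-$2$ tensor, every quantity appearing in Theorem \ref{th2.2} has a direct matrix counterpart, and Corollary \ref{cor1} is obtained by translating the notation.

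First I would rewrite each ingredient of Theorem \ref{th2.2} with $m = 2$. The diagonal entries $a_{i\cdots i}$ become $a_{ii}$; the row sum
\[
r_i(\mathcal{A}) = \sum\limits_{i_2,\ldots,i_m\in N,\,\delta_{ii_2\cdots i_m}=0} |a_{ii_2\cdots i_m}|
\]
collapses to $\sum_{j\neq i}|a_{ij}| = r_i(A)$; and the entry $a_{ji\cdots i}$, whose trailing index string $i\cdots i$ has length $m-1$, reduces to $a_{ji}$. With these substitutions, the sets $\Gamma_i(\mathcal{A})$, $\Delta_{ij}(\mathcal{A})$, $\Delta_i(\mathcal{A})$, and $\Omega_i(\mathcal{A})$ are literally the sets $\Gamma_i(A)$, $\Delta_{ij}(A)$, $\Delta_i(A)$, and $\Omega_i(A)$ in the statement of Corollary \ref{cor1}. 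Invoking Theorem \ref{th2.2} then delivers both $\sigma(A) \subseteq \Omega(A)$ and the inclusion $\Omega(A) \subseteq \Gamma(A)$ at once.

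There is essentially no obstacle here; the only point that requires mild care is the bookkeeping of the trailing indices in $a_{ji\cdots i}$, where one must remember that $i\cdots i$ has length $m-1$ and therefore collapses to a single $i$ when $m=2$. No fresh eigenvector argument is needed, since the proof of Theorem \ref{th2.2}---which uses the $t$th coordinate $x_t$ of maximum modulus together with any other coordinate $x_j$ in the eigenvalue equation---already furnishes the full argument in the broader tensor setting, and its specialization to $Ax = \lambda x$ is automatic.
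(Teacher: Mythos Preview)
Your proposal is correct and matches the paper's own treatment: the paper derives Corollary~\ref{cor1} simply by observing that a matrix is an order-$2$ tensor and reading off Theorem~\ref{th2.2} with $m=2$, exactly as you do. No separate argument is given or needed.
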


Remak here that the set $\Omega(A)$ in Corollary \ref{cor1} is a correction of the eigenvalue inclusion set
\[\bigcup\limits_{i\in N} \left( \Gamma_i(A)\backslash  \left(\bigcup\limits_{j\neq i} \Delta_{ij}'(A)\right)\right) \] for matrices in \cite{Me}, where
\[\Delta_{ij}'(A)= \left\{z\in
\mathbb{C}:|z-a_{jj}| \geq  2|a_{ji}|- r_j(A)\right\}.\]

\section {Exclusion sets for the Brauer-type set for tensors}

Another well-known eigenvalue inclusion set for matrices are provided by Brauer in \cite{brauer}.
In \cite{Liet0} Li et al. gave an example to show that this set
cannot be extended to higher order tensors, and gave a Brauer-type set to locate all eigenvalues of a tensor as follows.

\begin{thm}
\label{th3.1} Let $\mathcal {A}=(a_{i_1\cdots i_m}) \in \mathbb{C}^{[m,n]}$ with $n\geq 2$. Then
\[
\sigma(\mathcal {A})\subseteq \mathcal{K}(\mathcal {A})=
\bigcup\limits_{i,j\in N,\atop j\neq i} \mathcal {K}_{ij}(\mathcal
{A}),
\]
where
\[\mathcal {K}_{ij}(\mathcal {A})=\left\{z\in
\mathbb{C}:\left(|z-a_{i\cdots i}|-r_i^j(\mathcal
{A})\right)|z-a_{j\cdots j}|\leq |a_{ij\cdots j}|r_j(\mathcal
{A})\right\}\] and
\[r_i^j(\mathcal
{A})=\sum\limits_{\delta_{ii_2\ldots i_m}=0,\atop \delta_{ji_2\ldots
i_m}=0} |a_{ii_2\cdots i_m}|=r_i(\mathcal {A})-|a_{ij\cdots j}|.\]
\end{thm}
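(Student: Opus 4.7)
The plan is to mimic the proof of Theorem~\ref{th2.1}, but this time exploit the second-largest component of the eigenvector as well as the largest one, in order to get an inequality that couples two indices.

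First I would take an eigenvalue $\lambda$ with eigenvector $x=(x_1,\ldots,x_n)^T$, pick $t\in N$ with $|x_t|=\max_{i\in N}|x_i|$ and, using $n\ge 2$, pick $s\in N\setminus\{t\}$ with $|x_s|=\max_{i\neq t}|x_i|$. From the $t$-th equation of $\mathcal{A}x^{m-1}=\lambda x^{[m-1]}$, I would split off the single term whose last $m-1$ indices are all equal to $s$:
\[
(\lambda-a_{t\cdots t})x_t^{m-1}=a_{ts\cdots s}x_s^{m-1}+\sum_{\delta_{ti_2\cdots i_m}=0,\ \delta_{si_2\cdots i_m}=0}a_{ti_2\cdots i_m}x_{i_2}\cdots x_{i_m}.
\]
Using $|x_{i_k}|\le|x_t|$ on the remaining sum (which contributes exactly $r_t^s(\mathcal{A})|x_t|^{m-1}$ after applying the triangle inequality) yields
\[
\bigl(|\lambda-a_{t\cdots t}|-r_t^s(\mathcal{A})\bigr)|x_t|^{m-1}\le|a_{ts\cdots s}||x_s|^{m-1}.
\]

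Next I would apply the Ger\v{s}gorin-style estimate to the $s$-th equation. Again bounding every $|x_{i_k}|$ by $|x_t|$ gives $|\lambda-a_{s\cdots s}||x_s|^{m-1}\le r_s(\mathcal{A})|x_t|^{m-1}$. Multiplying the two displayed inequalities gives
\[
\bigl(|\lambda-a_{t\cdots t}|-r_t^s(\mathcal{A})\bigr)|\lambda-a_{s\cdots s}||x_t|^{m-1}|x_s|^{m-1}\le|a_{ts\cdots s}|r_s(\mathcal{A})|x_t|^{m-1}|x_s|^{m-1}.
\]
If $|x_s|>0$ (so also $|x_t|>0$), I cancel the common factor and conclude $\lambda\in\mathcal{K}_{ts}(\mathcal{A})\subseteq \mathcal{K}(\mathcal{A})$.

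The main obstacle will be the degenerate case $|x_s|=0$, where the cancellation above is illegitimate. Here every component of $x$ except $x_t$ vanishes, so every summand on the right-hand side of the $t$-th eigen-equation (which has at least one index different from $t$) vanishes, forcing $\lambda=a_{t\cdots t}$. For any $j\neq t$ this makes $|\lambda-a_{t\cdots t}|-r_t^j(\mathcal{A})\le 0$, so the defining inequality of $\mathcal{K}_{tj}(\mathcal{A})$ holds trivially because its right-hand side is nonnegative. Hence $\lambda\in\mathcal{K}_{tj}(\mathcal{A})\subseteq\mathcal{K}(\mathcal{A})$ in this case too, completing the proof.
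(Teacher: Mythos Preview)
Your argument is correct and follows essentially the same route as the paper's (embedded in the proof of Theorem~\ref{nth3.1}): pick the two largest components $|x_t|\ge|x_s|$, derive $\bigl(|\lambda-a_{t\cdots t}|-r_t^s(\mathcal{A})\bigr)|x_t|^{m-1}\le|a_{ts\cdots s}||x_s|^{m-1}$ and $|\lambda-a_{s\cdots s}||x_s|^{m-1}\le r_s(\mathcal{A})|x_t|^{m-1}$, then multiply and cancel. The only minor difference is in the degenerate case $|x_s|=0$: the paper simply reads off $|\lambda-a_{t\cdots t}|-r_t^s(\mathcal{A})\le 0$ directly from the first inequality (since $|x_t|>0$), whereas you take the extra (correct but unnecessary) step of returning to the eigen-equation to deduce the stronger fact $\lambda=a_{t\cdots t}$.
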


Next we try to find some proper subsets of $\mathcal{K}(\mathcal {A})$  in which there is not any eigenvalue of a tensor $\mathcal {A}$.

\begin{thm}
\label{nth3.1}  Let $\mathcal {A}=(a_{i_1\cdots i_m}) \in \mathbb{C}^{[m,n]}$ with $n\geq 2$. Then
\[
\sigma(\mathcal {A})\subseteq \Theta (\mathcal
{A}) =\bigcup\limits_{i,j\in N,\atop j\neq i} \Theta_{ij} (\mathcal
{A}),\]
where $ \Theta_{ij} (\mathcal {A})=\mathcal {K}_{ij}(\mathcal
{A})\backslash \Lambda_{i}(\mathcal {A}) $, $\Lambda_{i}(\mathcal {A})=\bigcup\limits_{p\neq i} \Lambda_{ip}(\mathcal {A})$
and
\[\Lambda_{ip}(\mathcal {A})=\left\{z\in
\mathbb{C}:(|z-a_{i\cdots i}|+r_i^p(\mathcal {A}))|z-a_{p\cdots p}|< |a_{ip\cdots p}|(2|a_{pi\cdots i}|- r_p(\mathcal {A})) \right\}.  \]
Furthermore, $ \Theta (\mathcal{A}) \subseteq \mathcal {K}(\mathcal{A})$.
\end{thm}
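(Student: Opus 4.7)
The plan is to adapt the exclusion technique of Theorem \ref{th2.2} to the Brauer-type setting of Theorem \ref{th3.1}. Fix $\lambda \in \sigma(\mathcal{A})$ with eigenvector $x = (x_1,\ldots,x_n)^T$, pick $t \in N$ with $|x_t| = \max_{i \in N}|x_i|$ and, using $n \geq 2$, pick $s \neq t$ with $|x_s| = \max_{i \neq t}|x_i|$. The proof of Theorem \ref{th3.1} already yields $\lambda \in \mathcal{K}_{ts}(\mathcal{A})$, so it suffices to show that $\lambda \notin \Lambda_{tp}(\mathcal{A})$ for every $p \neq t$. Doing so places $\lambda$ in $\mathcal{K}_{ts}(\mathcal{A}) \setminus \Lambda_t(\mathcal{A}) = \Theta_{ts}(\mathcal{A}) \subseteq \Theta(\mathcal{A})$, and the final containment $\Theta(\mathcal{A}) \subseteq \mathcal{K}(\mathcal{A})$ follows immediately from the definition $\Theta_{ij}(\mathcal{A}) = \mathcal{K}_{ij}(\mathcal{A}) \setminus \Lambda_i(\mathcal{A})$.

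To rule out $\lambda \in \Lambda_{tp}(\mathcal{A})$, I would combine two scalar estimates obtained from the $t$-th and $p$-th equations of (\ref{Maineq}). Isolating the term $a_{tp\cdots p}x_p^{m-1}$ in the $t$-th equation and bounding $|x_{i_k}| \leq |x_t|$ on the remaining off-diagonal terms gives
\[|a_{tp\cdots p}||x_p|^{m-1} \leq \bigl(|\lambda - a_{t\cdots t}| + r_t^p(\mathcal{A})\bigr)|x_t|^{m-1}.\]
Isolating the term $a_{pt\cdots t}x_t^{m-1}$ in the $p$-th equation, bounding the remaining factors of $x$ by $|x_t|$, and then simplifying via $r_p^t(\mathcal{A}) = r_p(\mathcal{A}) - |a_{pt\cdots t}|$ yields
\[\bigl(2|a_{pt\cdots t}| - r_p(\mathcal{A})\bigr)|x_t|^{m-1} \leq |\lambda - a_{p\cdots p}||x_p|^{m-1}.\]
Multiplying these two inequalities and cancelling the common nonzero factor $|x_t|^{m-1}|x_p|^{m-1}$ produces
\[\bigl(|\lambda - a_{t\cdots t}| + r_t^p(\mathcal{A})\bigr)|\lambda - a_{p\cdots p}| \geq |a_{tp\cdots p}|\bigl(2|a_{pt\cdots t}| - r_p(\mathcal{A})\bigr),\]
which is exactly the negation of the strict inequality defining $\Lambda_{tp}(\mathcal{A})$.

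The main obstacle is legitimising the cancellation of $|x_p|^{m-1}$, which breaks when $|x_p| = 0$. In that degenerate case the second displayed inequality reduces to $\bigl(2|a_{pt\cdots t}| - r_p(\mathcal{A})\bigr)|x_t|^{m-1} \leq 0$, and since $|x_t|>0$ this forces $2|a_{pt\cdots t}| - r_p(\mathcal{A}) \leq 0$. But then the defining inequality of $\Lambda_{tp}(\mathcal{A})$ has a nonnegative left-hand side and a nonpositive right-hand side, so $\Lambda_{tp}(\mathcal{A}) = \emptyset$ and the exclusion holds vacuously. The identical observation disposes of the case $2|a_{pt\cdots t}| - r_p(\mathcal{A}) \leq 0$ in general. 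Taking the union over $p \neq t$ then gives $\lambda \notin \Lambda_t(\mathcal{A})$ and completes the argument.
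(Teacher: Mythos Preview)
Your argument is correct and follows essentially the same route as the paper's own proof: choose $t$ (and $s$) by the two largest coordinates of the eigenvector, invoke Theorem~\ref{th3.1} for $\lambda\in\mathcal{K}_{ts}(\mathcal{A})$, then for each $p\neq t$ isolate $a_{tp\cdots p}x_p^{m-1}$ and $a_{pt\cdots t}x_t^{m-1}$ in the $t$-th and $p$-th equations, multiply the resulting estimates, and treat $|x_p|=0$ separately. Your handling of the sign issue when $2|a_{pt\cdots t}|-r_p(\mathcal{A})\le 0$ (noting $\Lambda_{tp}(\mathcal{A})=\emptyset$) is in fact slightly more explicit than the paper's, which only splits on $|x_p|$.
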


\begin{proof} For any $\lambda\in \sigma(\mathcal {A})$, let $ x=(x_1,x_2,\ldots,x_n)^T\in
\mathbb{C}^n\backslash \{0\}$ be an associated eigenvector, i.e.,
\[
\mathcal {A}x^{m-1}=\lambda x^{[m-1]}. \] Let
\[|x_t|\geq |x_s|\geq \max\{|x_k|:k\in N,k\neq s, k\neq t\}\] (where
the last term above is defined to be zero if $n=2$). Obviously,
$|x_t|>0$. From (\ref{Maineq}), we have
\[(\lambda-a_{t\cdots t})
x_t^{m-1}=\sum\limits_{\delta_{ti_2\ldots i_m}=0,\atop
\delta_{si_2\ldots i_m} =0} a_{ti_2\cdots i_m}x_{i_2}\cdots x_{i_m}+
a_{ts\cdots s}x_s^{m-1}. \] Taking modulus in the above
equation and using the triangle inequality gives
\begin{eqnarray*}
|\lambda-a_{t\cdots t}||x_t|^{m-1}&\leq &
\sum\limits_{\delta_{ti_2\ldots i_m}=0,\atop
\delta_{si_2\ldots i_m}=0} |a_{ti_2\cdots i_m}||x_{i_2}|\cdots |x_{i_m}|+|a_{ts\cdots s}||x_s|^{m-1}\nonumber\\
&\leq& \sum\limits_{\delta_{ti_2\ldots i_m}=0,\atop
\delta_{si_2\ldots
i_m}=0} |a_{ti_2\cdots i_m}||x_t|^{m-1}+|a_{ts\cdots s}||x_s|^{m-1}\\
&=& r_t^s(\mathcal {A})|x_t|^{m-1}+|a_{ts\cdots
s}||x_s|^{m-1},\nonumber
\end{eqnarray*}
equivalently, \begin{equation}\label{eq 3.2} \left(
|\lambda-a_{t\cdots t}|-r_t^s(\mathcal {A})\right)|x_t|^{m-1}\leq
|a_{ts\cdots s}||x_s|^{m-1}.
\end{equation}
If $|x_s|=0$, then $|\lambda-a_{t\cdots t}|-r_t^s(\mathcal {A})\leq
0$ as $|x_t|>0$, and it is obvious that $\lambda \in \mathcal
{K}_{t,s}(\mathcal {A})\subseteq \mathcal {K}(\mathcal {A})$.
Otherwise, $|x_s|>0$. Moreover, from (\ref{Maineq}), we
similarly get
\begin{equation}\label{eq 3.3} |\lambda-a_{s\cdots s}||x_s|^{m-1}\leq r_s(\mathcal {A})|x_t|^{m-1}.
\end{equation}
Multiplying Inequality (\ref{eq 3.3}) with Inequality (\ref{eq
3.2}), we have
\[\left(|\lambda-a_{t\cdots t}|-r_t^s(\mathcal {A})\right)|\lambda-a_{s\cdots s}||x_t|^{m-1}|x_s|^{m-1}\leq
|a_{ts\cdots s}|r_s(\mathcal {A})|x_t|^{m-1}|x_s|^{m-1}.\] Note that
$|x_t|^{m-1}|x_s|^{m-1}>0$. Then
\[\left(|\lambda-a_{t\cdots t}|-r_t^s(\mathcal {A})\right)|\lambda-a_{s\cdots s}|\leq
|a_{ts\cdots s}|r_s(\mathcal {A}),\] which implies
\begin{equation}\label{eqn2.4} \lambda\in \mathcal {K}_{ts}(\mathcal {A}).\end{equation}

By the $p$-th equation of (\ref{Maineq}) for each  $p\neq t$, we have
\[(\lambda-a_{p\cdots p})
x_p^{m-1}-\sum\limits_{\delta_{pi_2\ldots i_m}=0,\atop
\delta_{ti_2\ldots i_m} =0} a_{pi_2\cdots i_m}x_{i_2}\cdots x_{i_m}=
a_{pt\cdots t}x_t^{m-1} \]
and
\[|a_{pt\cdots t}||x_t|^{m-1}\leq |\lambda-a_{p\cdots p}| |x_p|^{m-1} +
\sum\limits_{\delta_{pi_2\ldots i_m}=0,\atop
\delta_{ti_2\ldots i_m} =0} |a_{pi_2\cdots i_m}||x_{t}|^{m-1},\]
equivalently,
\begin{equation} \label{eqn2.5} (2|a_{pt\cdots t}|- r_p(\mathcal {A}))|x_t|^{m-1}\leq |\lambda-a_{p\cdots p}| |x_p|^{m-1}. \end{equation}
Similarly, by the $t$th equation of (\ref{Maineq}), we have
\[(\lambda-a_{t\cdots t})
x_t^{m-1}-\sum\limits_{\delta_{ti_2\ldots i_m}=0,\atop
\delta_{pi_2\ldots i_m} =0} a_{ti_2\cdots i_m}x_{i_2}\cdots x_{i_m}=
a_{tp\cdots p}x_p^{m-1} \]
and
\[|a_{tp\cdots p}||x_p|^{m-1} \leq |\lambda-a_{t\cdots t}||x_t|^{m-1}+
\sum\limits_{\delta_{ti_2\ldots i_m}=0,\atop
\delta_{pi_2\ldots i_m} =0} |a_{ti_2\cdots i_m}||x_{t}|^{m-1},\]
equivalently,
\begin{equation} \label{eqn2.6} |a_{tp\cdots p}||x_p|^{m-1} \leq (|\lambda-a_{t\cdots t}|+r_t^p(\mathcal {A}))|x_t|^{m-1}. \end{equation}
If $|x_p|>0$, then multiplying Inequality (\ref{eqn2.5}) with Inequality (\ref{eqn2.6}) gives
\[(2|a_{pt\cdots t}|- r_p(\mathcal {A}))|a_{tp\cdots p}||x_p|^{m-1}|x_t|^{m-1} \leq
|\lambda-a_{p\cdots p}|(|\lambda-a_{t\cdots t}|+r_t^p(\mathcal {A}))|x_t|^{m-1} |x_p|^{m-1}\]
and
\begin{equation} \label{eqn2.7}
(2|a_{pt\cdots t}|- r_p(\mathcal {A}))|a_{tp\cdots p}| \leq
|\lambda-a_{p\cdots p}|(|\lambda-a_{t\cdots t}|+r_t^p(\mathcal {A})),
\end{equation}
i.e.,
\begin{equation} \label{eqn2.8}
\lambda \notin \Lambda_{tp}(\mathcal {A}).
\end{equation}
If $|x_p|=0$, then $ 2|a_{pt\cdots t}|- r_p(\mathcal {A})\leq 0$ holds from (\ref{eqn2.5}),  and (\ref{eqn2.7}) also holds, consequently, (\ref{eqn2.8}) holds.
Note that  (\ref{eqn2.8}) holds for any $p\neq t$. Hence,
 \begin{equation} \label{eqn2.9}
\lambda \notin  \bigcup \limits_{p\neq t} \Lambda_{tp}(\mathcal {A}).
\end{equation}
By (\ref{eqn2.4}) and (\ref{eqn2.9}) we have
\[\lambda \in
{K}_{ts}(\mathcal {A}) \backslash \left(\bigcup \limits_{p\neq t}\Lambda_{tp}(\mathcal {A})\right),\] this implies
\[\lambda \in \left(
\bigcup\limits_{i,j\in N,\atop j\neq i} \mathcal {K}_{ij}(\mathcal
{A})\backslash \left(\bigcup \limits_{p\neq i}\Lambda_{ip}(\mathcal {A})\right) \right)
 =\bigcup\limits_{i,j\in N,\atop j\neq i} \mathcal {K}_{ij}(\mathcal
{A})\backslash \Lambda_{i}(\mathcal {A}) = \bigcup\limits_{i,j\in N,\atop j\neq i} \Theta_{ij} (\mathcal
{A}) =\Theta (\mathcal
{A}).\]

Furthermore, from $ \Theta_{ij} (\mathcal {A})=\mathcal {K}_{ij}(\mathcal
{A})\backslash \Lambda_{i}(\mathcal {A}) \subseteq \mathcal {K}_{ij}(\mathcal
{A}) $, we have $ \Theta (\mathcal
{A}) \subseteq \mathcal {K}(\mathcal
{A})$. The conclusion follows.
\end{proof}

From Remak \ref{rmk2.1}, we have that for any $p\neq i$,
$2|a_{pi\cdots i}|- r_p(\mathcal {A})\leq r_p(\mathcal {A}) $
and then
\[\Lambda_{ip}(\mathcal {A}) \subseteq \mathcal {K}_{ip}(\mathcal
{A}).\]
However, $\lambda \notin \Lambda_{ip}(\mathcal {A})$ for each $\lambda \in \sigma(\mathcal {A})$.
Hence,  $\Lambda_{ip}(\mathcal {A})$ is a so-called exclusion set for the Brauer-type set $  \mathcal {K}(\mathcal {A})$.
Consider again the tensor $A$ in Remak \ref{rmk2.1}. The sets $\Theta_{ij} (\mathcal {A})$, $ j\neq i$ are drawn in Figure 3,
and their union $ \Theta(\mathcal {A})$ are drawn in Figure 4. It is easy to see that $\sigma (\mathcal {A} )\subseteq \Theta (\mathcal{A})$ and $ \Theta (\mathcal
{A}) \subseteq \mathcal {K}(\mathcal{A})$. In addition, by the relationship of $  \mathcal {K}(\mathcal{A})$ and $\Gamma(\mathcal {A})$,
that is, $\mathcal {K}(\mathcal{A})  \subseteq \Gamma(\mathcal {A})$, we have
\[  \Theta (\mathcal
{A}) \subseteq \mathcal {K}(\mathcal{A}) \subseteq \Gamma(\mathcal {A}).\]
However, $ \Theta (\mathcal {A}) \subseteq  \Omega(\mathcal {A})$ may not hold in general, which can be shown by Figure 2 and Figure 4.



Similarly to Corollary \ref{cor0} and Corollary \ref{cor1}, we can obtain the following results from Theorem  \ref{nth3.1}.

\begin{corollary}  Let $\mathcal {A} =(a_{i_1i_2\cdots i_n})\in \mathbb{C}^{[m,n]}$. If for any  $i,j\in N$ and $j\neq i$,
either
\[ \left(|a_{i\cdots i}|-r_i^j(\mathcal
{A})\right)|a_{j\cdots j}|> |a_{ij\cdots j}|r_j(\mathcal
{A})\]
or
\[\left(|a_{i\cdots i}|+r_i^p(\mathcal {A})\right)|a_{p\cdots p}|< |a_{ip\cdots p}|(2|a_{pi\cdots i}|- r_p(\mathcal {A}))~for ~some p\neq i,\]
then $det (\mathcal {A}) \neq 0 $.
\end{corollary}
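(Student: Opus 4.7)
The plan is to leverage Theorem \ref{nth3.1} together with the standard equivalence $\det(\mathcal{A}) = 0 \Leftrightarrow 0 \in \sigma(\mathcal{A})$ that was invoked just before Corollary \ref{cor0}. Concretely, it suffices to prove that the stated hypotheses force $0 \notin \Theta(\mathcal{A})$, since then $0 \notin \sigma(\mathcal{A})$ and so $\det(\mathcal{A}) \neq 0$.

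First I would unpack the definition $\Theta(\mathcal{A}) = \bigcup_{i \neq j} \Theta_{ij}(\mathcal{A})$ with $\Theta_{ij}(\mathcal{A}) = \mathcal{K}_{ij}(\mathcal{A}) \setminus \Lambda_i(\mathcal{A})$. Thus $0 \notin \Theta(\mathcal{A})$ if and only if for every ordered pair $(i,j)$ with $j \neq i$, either $0 \notin \mathcal{K}_{ij}(\mathcal{A})$, or $0 \in \Lambda_i(\mathcal{A}) = \bigcup_{p \neq i} \Lambda_{ip}(\mathcal{A})$. Next I would substitute $z = 0$ into the defining inequalities of $\mathcal{K}_{ij}(\mathcal{A})$ and $\Lambda_{ip}(\mathcal{A})$: the first hypothesis of the corollary reads exactly $(|0-a_{i\cdots i}| - r_i^j(\mathcal{A}))|0 - a_{j\cdots j}| > |a_{ij\cdots j}| r_j(\mathcal{A})$, i.e. $0 \notin \mathcal{K}_{ij}(\mathcal{A})$, while the second hypothesis reads $(|0-a_{i\cdots i}| + r_i^p(\mathcal{A}))|0 - a_{p\cdots p}| < |a_{ip\cdots p}|(2|a_{pi\cdots i}| - r_p(\mathcal{A}))$, i.e. $0 \in \Lambda_{ip}(\mathcal{A})$ for that particular $p$.

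With this reformulation in hand, the conclusion is almost immediate: fix any pair $(i,j)$ with $j \neq i$. By assumption, either the first inequality holds, in which case $0 \notin \mathcal{K}_{ij}(\mathcal{A})$ and hence $0 \notin \Theta_{ij}(\mathcal{A})$; or there exists some $p \neq i$ satisfying the second inequality, in which case $0 \in \Lambda_{ip}(\mathcal{A}) \subseteq \Lambda_i(\mathcal{A})$, so $0 \notin \mathcal{K}_{ij}(\mathcal{A}) \setminus \Lambda_i(\mathcal{A}) = \Theta_{ij}(\mathcal{A})$. Taking the union over all admissible pairs yields $0 \notin \Theta(\mathcal{A})$, and the proof closes via Theorem \ref{nth3.1}.

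There is no real analytic obstacle here; the argument is a direct specialization of Theorem \ref{nth3.1} at $z = 0$. The only care needed is bookkeeping with the logical structure: the hypothesis is a universal statement over pairs $(i,j)$ whose second alternative is existential in $p$, and one must verify that the $p$ supplied by the hypothesis correctly places $0$ in $\Lambda_i(\mathcal{A})$ regardless of $j$, so that the exclusion from $\Theta_{ij}(\mathcal{A})$ goes through for every $j \neq i$ simultaneously.
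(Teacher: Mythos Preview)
Your proof is correct and follows exactly the approach the paper intends: the corollary is stated without proof, with the remark that it follows from Theorem~\ref{nth3.1} in the same way Corollary~\ref{cor0} follows from Theorem~\ref{th2.2}, namely by specializing the inclusion $\sigma(\mathcal{A})\subseteq\Theta(\mathcal{A})$ at $z=0$ and invoking $\det(\mathcal{A})=0\Leftrightarrow 0\in\sigma(\mathcal{A})$. Your careful unpacking of the logical structure (universal over $(i,j)$, existential in $p$) is precisely what is needed.
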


\begin{corollary}
 Let $A=(a_{ij})$ be a complex matrix. Then
\[
\sigma(A)\subseteq \Theta (
A) =\bigcup\limits_{i,j\in N,\atop j\neq i} \Theta_{ij} (A),\]
where $ \Theta_{ij} (A)=\mathcal {K}_{ij}(A)\backslash \Lambda_{i}(A) $, $\Lambda_{i}(A)=\bigcup\limits_{p\neq i} \Lambda_{ip}(A)$
and
\[\Lambda_{ip}(A)=\left\{z\in
\mathbb{C}:(|\lambda-a_{ii}|+r_i^p(A))|\lambda-a_{pp}|< |a_{ip}|(2|a_{pi}|- r_p(A)) \right\}.  \]
\end{corollary}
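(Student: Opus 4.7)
The plan is to deduce this corollary directly from Theorem \ref{nth3.1} by observing that a complex matrix $A=(a_{ij})$ is exactly a tensor $\mathcal{A}\in\mathbb{C}^{[m,n]}$ with $m=2$, so the corollary is merely the $m=2$ specialization of the theorem. No new eigenvalue-equation analysis is needed; the content is a translation of tensor notation into matrix notation.

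First, I would carry out that translation carefully. When $m=2$ any multi-index $i\,i_2\cdots i_m$ collapses to the pair $i\,i_2$, so the condition $\delta_{ii_2\ldots i_m}=0$ reduces to $i_2\neq i$. Therefore $r_i(\mathcal{A})=\sum_{i_2\neq i}|a_{ii_2}|$ coincides with the standard deleted absolute row sum $r_i(A)$ of the matrix, an entry of the form $a_{ij\cdots j}$ (with $m-1$ copies of $j$) becomes simply $a_{ij}$, and $r_i^j(\mathcal{A})=r_i(\mathcal{A})-|a_{ij\cdots j}|$ becomes $r_i^j(A)=r_i(A)-|a_{ij}|$.

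Next I would verify that each building block appearing in Theorem \ref{nth3.1} matches the one in the corollary under this translation. The Brauer-type disk
\[
\mathcal{K}_{ij}(\mathcal{A})=\bigl\{z\in\mathbb{C}:(|z-a_{i\cdots i}|-r_i^j(\mathcal{A}))|z-a_{j\cdots j}|\leq|a_{ij\cdots j}|r_j(\mathcal{A})\bigr\}
\]
becomes the matrix Brauer oval
\[
\mathcal{K}_{ij}(A)=\bigl\{z\in\mathbb{C}:(|z-a_{ii}|-r_i^j(A))|z-a_{jj}|\leq|a_{ij}|r_j(A)\bigr\},
\]
and the exclusion set $\Lambda_{ip}(\mathcal{A})$ becomes the $\Lambda_{ip}(A)$ stated in the corollary. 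Consequently $\Theta_{ij}(\mathcal{A})=\mathcal{K}_{ij}(\mathcal{A})\setminus\Lambda_i(\mathcal{A})$ reduces to $\Theta_{ij}(A)=\mathcal{K}_{ij}(A)\setminus\Lambda_i(A)$ and $\Theta(\mathcal{A})$ reduces to $\Theta(A)$.

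Finally, applying Theorem \ref{nth3.1} to the order-$2$ tensor $A$ gives $\sigma(A)\subseteq\Theta(A)$, which is the claim. The only "obstacle" here is bookkeeping; one must confirm that each summation index, each diagonal entry and each Kronecker-delta condition in the tensor statement specializes to the intended matrix quantity, but there is no new analytic step beyond what is already contained in the proof of Theorem \ref{nth3.1}.
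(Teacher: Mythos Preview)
Your proposal is correct and matches the paper's approach: the paper does not give a separate proof of this corollary but simply states that it is obtained from Theorem~\ref{nth3.1} (by taking $m=2$), which is exactly the specialization argument you describe.
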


\section{Conclusions}
In this paper, we exclude some proper subsets respectively, which do not include any eigenvalues of a tensor, from the
Ger\v{s}gorin eigenvalue inclusion set $\Gamma(\mathcal {A})$ of tensors in \cite{Qi3} and the Brauer-type eigenvalue inclusion set $\mathcal {K}(\mathcal{A}) $ in \cite{Liet0}
to give two new eigenvalue inclusion sets $\Omega(\mathcal {A})$ and $\Theta (\mathcal {A}) $ with
\[\Omega(\mathcal {A}) \subseteq \Gamma(\mathcal {A}), ~and ~ \Theta (\mathcal {A})\subseteq \mathcal {K}(\mathcal{A}).\]
Besides the sets $\Gamma(\mathcal {A})$ and $\mathcal {K}(\mathcal{A}) $, there are another eigenvalue inclusion sets, such as the sets in \cite{Bu,Bu1,Hu1,Liet1,Liet2,Liet3}.
Hence, for these sets it is interesting to find their proper subsets which do not include any eigenvalue of a tensor to exclude them.

\section*{Acknowledgements}
This work is supported by National Natural Science Foundations of
China (11601473 and 11361074), the National Natural Science Foundation of Zhejiang Province (LY14A010007, LQ14G010002), Ningbo Natural
Science Foundation (2015A610173), and CAS "Light of West China" Program.







\end{document}